\def\struckint{\mathop{%
\def\mathpalette##1##2{\mathchoice{##1\displaystyle##2}%
 {##1\textstyle##2}{##1\scriptstyle##2}{##1\scriptscriptstyle##2}}%
\mathpalette
{\vbox\bgroup\baselineskip0pt\lineskiplimit-1000pt\lineskip-1000pt
\halign\bgroup\hfill$}
{##$\hfill\cr{\intop}\cr\diagup\cr\egroup\egroup}%
}\limits}
\def\per{\mathop{\rm per}\nolimits}
\def\det{\mathop{\rm det}\nolimits}
\def\integerpartitions{\mathop{\mathcal{P}_n}\nolimits}
\def\partitionsn{\mathop{\mathcal{P}_{[n]}}\nolimits}
\def\partitionsnk{\mathop{\mathcal{P}_{[n]:k}}\nolimits}
\def\sgn{\mathop{\mathrm{sgn}}\nolimits}
\def\Im{\mathop{\mathrm{Im}}\nolimits}
\def\symmetricn{\mathop{\mathcal{S}_{n}}\nolimits}
\def\part{\mathop{\mbox{Part}}\nolimits}
\def\ell{\mathop{[l]}\nolimits}
\newtheorem{thm}{Theorem}[section]
\newtheorem{prop}[thm]{Proposition}
\newtheorem{cor}[thm]{Corollary}
\newtheorem{conj}[thm]{Conjecture}
\newtheorem{rmk}[thm]{Remark}
\newenvironment{acknowledgment}{\medskip \noindent
        {\bf Acknowledgment.}}{\medskip}
\title{Some algebraic identities for the $\alpha$-permanent}
\author{Harry Crane}\address{Rutgers University\\ Department
of Statistics \\ 
110 Frelinghuysen Road\\
Piscataway, NJ 08854\\
hcrane@stat.rutgers.edu.}
\date{March 3, 2013}
\subjclass{15A15, 05E05, 05A17}
\keywords{permanent; $\alpha$-permanent; determinant; immanant; rencontres number}
\begin{document}
\maketitle
\begin{abstract}
We show that the permanent of a matrix is a linear combination of determinants of block diagonal matrices which are simple functions of the original matrix.  To prove this, we first show a more general identity involving $\alpha$-permanents: for arbitrary complex numbers $\alpha$ and $\beta$, we show that the $\alpha$-permanent of any matrix can be expressed as a linear combination of $\beta$-permanents of related matrices.  Some other identities for the $\alpha$-permanent of sums and products of matrices are shown, as well as a relationship between the $\alpha$-permanent and general immanants.  We conclude with a discussion of the computational complexity of the $\alpha$-permanent and provide some numerical illustrations.
\end{abstract}

\section{Introduction}\label{section:introduction}
The permanent of an $n\times n$ $\mathbb{C}$-valued matrix $M$ is defined by
\begin{equation}\label{eq:permanent}
\per M:=\sum_{\sigma\in\symmetricn}\prod_{j=1}^n M_{j,\sigma(j)},\end{equation}
where $\symmetricn$ denotes the symmetric group acting on $[n]:=\{1,\ldots,n\}$.  Study of the permanent dates to Binet and Cauchy in the early 1800s \cite{MincPermanent}; and much of the early interest in permanents was in understanding how its resemblance of the determinant,
\begin{equation}\label{eq:determinant}\det M:=\sum_{\sigma\in\symmetricn}\sgn(\sigma)\prod_{j=1}^n M_{j,\sigma(j)},\end{equation}
where $\sgn(\sigma)$ is the parity of $\sigma\in\symmetricn$, reconciled with some stark differences between the two.  An early consideration, answered in the negative by Marcus and Minc \cite{MarcusMinc1961}, was whether there exists a linear transformation $T$ such that $\per TM=\det M$ for any matrix $M$.   In his seminal paper on the $\#$P-complete complexity class, Valiant remarked about the perplexing relationship between the permanent and determinant, 
\begin{quote}
We do not know of any pair of functions, other than the permanent and determinant, for which the explicit algebraic expressions are so similar, and yet the computational complexities are apparently so different. (Valiant  \cite{ValiantPermanent}, p. 189)\end{quote}

In this paper, we hope to give some insight to Valiant's remark, by bringing forth the simple identity
\begin{equation}\label{eq:main identity2}
(-1)^n\per M=\sum_{\pi\in\partitionsn}(-1)^{\downarrow\#\pi}\det(M\cdot\pi),\end{equation}
which expresses the permanent as a linear combination of determinants of block diagonal matrices.   In \eqref{eq:main identity2}, the sum is over the collection $\partitionsn$ of set partitions of $[n]:=\{1,\ldots,n\}$, $\#\pi$ denotes the number of blocks of $\pi\in\partitionsn$, $x^{\downarrow j}:=x(x-1)\cdots(x-j+1)=:(-x)^{\uparrow j}(-1)^j$, and $\det(M\cdot\pi):=\prod_{b\in\pi}\det M[b]$, a product of determinants of the submatrices $M[b]$ with rows and columns labeled by the elements of each block of $\pi$.  Equation \eqref{eq:main identity2} is an immediate corollary of our main identity \eqref{eq:main identity} for the $\alpha$-permanent \cite{VereJones1988} which, for any $\alpha\in\mathbb{C}$, is defined by 
\begin{equation}\label{eq:alpha-permanent}
\per_{\alpha}M:=\sum_{\sigma\in\symmetricn}\alpha^{\#\sigma}\prod_{j=1}^nM_{j,\sigma(j)},\end{equation}
where $\#\sigma$ denotes the number of cycles of $\sigma\in\symmetricn$.  The $\alpha$-permanent generalizes both the permanent and the determinant: $\per M=\per_1 M$ and $\det M=\per_{-1}(-M)$; and, when $M=J_n$, the $n\times n$ matrix of all ones, \eqref{eq:alpha-permanent} coincides with the generating function of the Stirling numbers of the second kind: $\alpha^{\uparrow n}=\sum_{k=0}^n s(n,k)\alpha^{\uparrow k}$, where $s(n,k):=\#\{\pi\in\partitionsn:\#\pi=k\}$.
 Our main identity,
\begin{equation}\label{eq:main identity}
\per_{\alpha\beta}M=\sum_{\pi\in\partitionsn}\beta^{\downarrow\#\pi}\per_{\alpha}(M\cdot\pi)\quad\mbox{for all }\alpha,\beta\in\mathbb{C},\end{equation}
expresses the $\alpha$-permanent as a linear combination of $\beta$-permanents, for any choice of $\alpha$ and $\beta$.  This identity, and its corollaries, could be insightful to understanding the apparent gap between the computational complexity of \eqref{eq:permanent} and \eqref{eq:determinant}.  We discuss these observations further in section \ref{section:computational complexity}, and state a conjecture about the computational complexity of the $\alpha$-permanent.

In addition to \eqref{eq:main identity}, we show other identities for the $\alpha$-permanent of sums and products of matrices.  We separate these identities into two main theorems, calling the first the {\em Permanent Decomposition Theorem}.
\begin{thm}[Permanent Decomposition Theorem]\label{thm:main theorem}
For any $\alpha,\beta\in\mathbb{C}$ and $M\in\mathbb{C}^{n \times n}$,
\[\per_{\alpha\beta}M=\sum_{\pi\in\partitionsn}\beta^{\downarrow\#\pi}\per_{\alpha}(M\cdot\pi),\]
where $\partitionsn$ is the collection of set partitions of $[n]:=\{1,\ldots, n\}$, $\beta^{\downarrow j}:=\beta(\beta-1)\cdots(\beta-j+1)$ and $\per_{\alpha}(M\cdot\pi)=\prod_{b\in\pi}\per_{\alpha}M[b]$, with $M[b]$ denoting the submatrix of $M$ with rows and columns labeled by the elements of $b\subseteq[n]$.
\end{thm}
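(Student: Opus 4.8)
The plan is to verify the identity by expanding the right-hand side directly from the definitions and then reorganizing the resulting double sum by permutation rather than by partition. Fix $M\in\mathbb{C}^{n\times n}$. Writing out $\per_{\alpha}M[b]$ from \eqref{eq:alpha-permanent} for each block $b$ and multiplying over $b\in\pi$, a choice of one permutation of each block $b$ (an element of $\symmetric_b$) assembles into a single permutation $\sigma\in\symmetricn$ that fixes every block of $\pi$ setwise, and conversely every block-preserving $\sigma$ arises uniquely this way. Since the cycles of such a $\sigma$ are exactly the union of the cycles of its restrictions to the blocks, both $\#\sigma=\sum_{b\in\pi}\#(\sigma|_b)$ and $\prod_{j=1}^n M_{j,\sigma(j)}=\prod_{b\in\pi}\prod_{j\in b}M_{j,\sigma(j)}$ factor across blocks, so that
\[\per_{\alpha}(M\cdot\pi)=\sum_{\sigma}\alpha^{\#\sigma}\prod_{j=1}^n M_{j,\sigma(j)},\]
where the sum runs over those $\sigma\in\symmetricn$ leaving each block of $\pi$ invariant.

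Next I would interchange the order of summation in $\sum_{\pi\in\partitionsn}\beta^{\downarrow\#\pi}\per_{\alpha}(M\cdot\pi)$ so as to sum over $\sigma$ first. For a fixed $\sigma\in\symmetricn$, let $\pi_\sigma\in\partitionsn$ denote the partition of $[n]$ whose blocks are the cycles of $\sigma$. Then $\sigma$ leaves each block of $\pi$ invariant precisely when every cycle of $\sigma$ lies inside a single block of $\pi$, i.e. exactly when $\pi_\sigma$ refines $\pi$. Hence
\[\sum_{\pi\in\partitionsn}\beta^{\downarrow\#\pi}\per_{\alpha}(M\cdot\pi)=\sum_{\sigma\in\symmetricn}\alpha^{\#\sigma}\Bigl(\prod_{j=1}^n M_{j,\sigma(j)}\Bigr)\sum_{\pi\,\geq\,\pi_\sigma}\beta^{\downarrow\#\pi},\]
where the inner sum ranges over all partitions coarser than $\pi_\sigma$. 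Such coarsenings are in bijection with the partitions of the $\#\sigma$-element set of cycles of $\sigma$, and under this bijection $\#\pi$ equals the number of parts of the corresponding partition, so $\sum_{\pi\,\geq\,\pi_\sigma}\beta^{\downarrow\#\pi}=\sum_{k}s(\#\sigma,k)\beta^{\downarrow k}$, with $s(m,k)=\#\{\pi\in\mathcal{P}_{[m]}:\#\pi=k\}$ as defined above.

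The conceptual crux is then the classical Stirling identity $\sum_{k}s(m,k)\,\beta^{\downarrow k}=\beta^{m}$, which expresses an ordinary power through falling factorials; applying it with $m=\#\sigma$ collapses the inner sum to $\beta^{\#\sigma}$. Substituting back gives $\sum_{\sigma\in\symmetricn}\alpha^{\#\sigma}\beta^{\#\sigma}\prod_{j=1}^n M_{j,\sigma(j)}=\sum_{\sigma\in\symmetricn}(\alpha\beta)^{\#\sigma}\prod_{j=1}^n M_{j,\sigma(j)}$, which is exactly $\per_{\alpha\beta}M$ by \eqref{eq:alpha-permanent}. I expect the step requiring the most care to be the bookkeeping in the first paragraph, namely checking that the product of per-block $\alpha$-permanents corresponds bijectively to block-preserving permutations with both the cycle count and the matrix weight factoring multiplicatively; once the sum has been recast over permutations, the identity reduces entirely to the Stirling expansion of $\beta^{\#\sigma}$. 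Since that expansion is a polynomial identity in $\beta$, it holds for all complex $\beta$ (and for all $\alpha$), giving the theorem in full generality.
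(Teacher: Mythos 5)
Your proof is correct and follows essentially the same route as the paper's: expand $\per_{\alpha}(M\cdot\pi)$ as a sum over permutations whose cycles refine $\pi$, interchange the order of summation, and collapse the inner sum $\sum_{\pi\geq\pi_\sigma}\beta^{\downarrow\#\pi}$ via the Stirling identity $\sum_{k}s(m,k)\beta^{\downarrow k}=\beta^{m}$. The only difference is presentational: you spell out the bijections (block permutations assembling into block-preserving $\sigma$, and coarsenings of $\pi_\sigma$ corresponding to partitions of the cycle set) that the paper treats more tersely.
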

\begin{thm}\label{thm:main theorem2}
For any $\alpha,\beta\in\mathbb{C}$ and $A,B\in\mathbb{C}^{n\times n}$,
	\begin{equation}\label{eq:permanent sum identity} \per_{\alpha}(A+B)=\sum_{b\subseteq[n]}\per_{\alpha}(AI_b+BI_{b^c}),\end{equation}
	where $I_b$ is a diagonal matrix with $(i,i)$ entry $1$ if $i\in b$ and 0 otherwise and $b^c$ is the complement of $b$ in $[n]$, and
	\begin{equation}\label{eq:permanent product identity} \per_{\alpha}(AB)=\sum_{x\in[n]^n}\per_{\alpha}(B_x)\prod_{j=1}^nA_{j,x_j},\end{equation}
	where $B_x$ is the matrix whose $j$th row is the $x_j$th row of $B$ and $[n]^n:=\{(i_1,\ldots,i_n):1\leq i_j\leq n\mbox{ for all }j=1\ldots,n\}$.
	\end{thm}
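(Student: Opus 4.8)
The plan is to prove both identities by a single elementary strategy: expand each side using the definition \eqref{eq:alpha-permanent} of the $\alpha$-permanent, distribute the resulting products over their internal sums, interchange the (finite) order of summation, and recognize the surviving inner sum as the $\alpha$-permanent of the advertised matrix. Neither identity uses the machinery of Theorem~\ref{thm:main theorem}; both are bookkeeping once the definition is unwound.

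For the product identity \eqref{eq:permanent product identity}, I would begin from $\per_{\alpha}(AB)=\sum_{\sigma\in\symmetricn}\alpha^{\#\sigma}\prod_{j=1}^n(AB)_{j,\sigma(j)}$ and substitute $(AB)_{j,\sigma(j)}=\sum_{x_j=1}^n A_{j,x_j}B_{x_j,\sigma(j)}$ into each of the $n$ factors. Distributing the product $\prod_{j=1}^n$ over these internal sums yields a single sum over tuples $x=(x_1,\dots,x_n)\in[n]^n$, whose summand factors as $\big(\prod_{j=1}^n A_{j,x_j}\big)\big(\prod_{j=1}^n B_{x_j,\sigma(j)}\big)$. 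Since the $A$-factor does not depend on $\sigma$, pulling the $x$-sum outside the $\sigma$-sum isolates the inner sum $\sum_{\sigma}\alpha^{\#\sigma}\prod_{j=1}^n B_{x_j,\sigma(j)}$, which is exactly $\per_{\alpha}(B_x)$ because $(B_x)_{j,k}=B_{x_j,k}$ by definition of $B_x$. This step is entirely mechanical.

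For the sum identity \eqref{eq:permanent sum identity}, the cleaner route is to work from the right-hand side. Writing $(AI_b+BI_{b^c})_{j,\sigma(j)}=A_{j,\sigma(j)}\mathbf{1}[\sigma(j)\in b]+B_{j,\sigma(j)}\mathbf{1}[\sigma(j)\in b^c]$, one sees that for a fixed $\sigma$ the product over $j$ picks the $A$-entry in those rows $j$ with $\sigma(j)\in b$ and the $B$-entry in the rest, so the set of rows contributing an $A$-entry is precisely $\sigma^{-1}(b)$. The only genuinely nontrivial point is that $I_b$ and $I_{b^c}$ select by \emph{column}, whereas the expansion of $\prod_{j=1}^n(A+B)_{j,\sigma(j)}$ is naturally a sum over subsets of \emph{rows}. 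This is reconciled by observing that, for each fixed $\sigma$, the map $b\mapsto\sigma^{-1}(b)$ is a bijection of the power set $2^{[n]}$ onto itself; hence summing over all $b\subseteq[n]$ reproduces exactly the $2^n$-term expansion of $\prod_{j=1}^n\big(A_{j,\sigma(j)}+B_{j,\sigma(j)}\big)$. Interchanging the $b$-sum with the $\sigma$-sum then collapses the right-hand side to $\sum_{\sigma}\alpha^{\#\sigma}\prod_{j=1}^n(A+B)_{j,\sigma(j)}=\per_{\alpha}(A+B)$.

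The main obstacle, such as it is, lies wholly in this row/column bookkeeping for the first identity; once the bijection $b\mapsto\sigma^{-1}(b)$ is in hand, both arguments reduce to interchanging finite sums, which is immediate. I would present the product identity first, since it is the more transparent, and then reuse the same ``expand, distribute, re-sum'' template for the sum identity.
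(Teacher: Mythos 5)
Your proofs are correct, but they take a genuinely different route from the paper's. The paper proves both identities by induction on $n$, using the cofactor expansion of the $\alpha$-permanent along the last row, $\per_{\alpha}M=\alpha M_{i,i}\per_{\alpha}M^{(i,i)}+\sum_{j\neq i}M_{i,j}\per_{\alpha}M^{(i,j)}$, and then splitting the resulting sums according to whether $n+1$ lies in $b$ (for the sum identity) or according to the value of $x_{n+1}$ (for the product identity). You instead expand everything directly from the definition \eqref{eq:alpha-permanent} and interchange finite sums: for the product identity, distributing $\prod_{j}(AB)_{j,\sigma(j)}=\prod_{j}\sum_{x_j}A_{j,x_j}B_{x_j,\sigma(j)}$ over $x\in[n]^n$ and recognizing $\sum_{\sigma}\alpha^{\#\sigma}\prod_{j}B_{x_j,\sigma(j)}=\per_{\alpha}(B_x)$ is exactly right, since $(B_x)_{j,k}=B_{x_j,k}$; for the sum identity, your key observation that $b\mapsto\sigma^{-1}(b)$ is a bijection of the power set of $[n]$ onto itself correctly collapses the $b$-sum to $\prod_{j}\bigl(A_{j,\sigma(j)}+B_{j,\sigma(j)}\bigr)$ for each fixed $\sigma$. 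Your route is more elementary and self-contained: it avoids induction entirely and does not need the cofactor expansion, which the paper invokes without proof. It also handles cleanly a point on which the paper's prose is loose: right-multiplication by $I_b$ selects \emph{columns}, not rows as the paper's verbal description of $AI_b+BI_{b^c}$ asserts; your $\sigma^{-1}(b)$ bookkeeping treats the formula as actually written, and transposition-invariance of $\per_{\alpha}$ makes the two readings equivalent in any case. What the paper's inductive approach buys is mainly continuity with classical treatments (Minc's proofs of the analogous permanent identities proceed similarly) and a demonstration that the cofactor expansion of the $\alpha$-permanent is a workable tool; for these particular identities, your direct expand-distribute-resum argument is shorter and arguably more transparent.
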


Compared to the permanent, the $\alpha$-permanent has been scarcely studied in the literature.  At first glance, the $\alpha$-permanent may not appear as mathematically natural as the permanent or determinant: the $\alpha$-permanent is not an immanant; and it is not clear what, if any, interpretation is possible for values of $\alpha$ other than $\pm1$.  On the other hand, the $\alpha$-permanent arises naturally in statistical modeling of bosons \cite{HoughDetSurvey,Sos2000} (called {\em permanental processes}), as well as in connection to some well-known models in population genetics \cite{Crane2013b}.  In statistical physics applications, permanental processes are complementary to determinantal processes, which model fermions in quantum mechanics.  The Pauli exclusion principle asserts that identical fermions cannot simultaneously occupy the same quantum state, which is reflected in the property that $\det M=0$ if two rows of $M$ are identical.   Just as the exclusion principle does not apply to bosons, $\per M$ need not be zero if $M$ has identical rows.  

Because of the practical potential of permanents, devising efficient methods (random and deterministic) for approximation is a priority.  Recently, there has been some progress in this direction \cite{JerrumSinclairVigoda,LinialSW1998,McCullagh2012}; however,  a provably accurate method which is also practical for large matrices is not yet available.

In section \ref{section:identities}, we prove Theorems \ref{thm:main theorem} and \ref{thm:main theorem2} and observe immediate corollaries.   In section \ref{section:immanants}, we also discuss the relationship between the $\alpha$-permanent and general immanants.  In section \ref{section:computing}, we discuss computation of permanents in three contexts: in section \ref{section:special matrices}, we discuss exact computation of the permanent for some specially structured matrices; in section \ref{section:numerical}, we briefly discuss some approximations of the $\alpha$-permanent based on \eqref{eq:main identity}; in section \ref{section:computational complexity}, we use numerical approximations, see Table \ref{table:1}, and inspection of \eqref{eq:main identity} to make a conjecture about the computational complexity of the $\alpha$-permanent, which has not been studied.   

\section{Identities for the $\alpha$-permanent}\label{section:identities}
In addition to \eqref{eq:main identity}, \eqref{eq:permanent sum identity} and \eqref{eq:permanent product identity}, we observe several immediate corollaries for the $\alpha$-permanent.  We also discuss the relationship between the $\alpha$-permanent and the immanant in section \ref{section:immanants}.  In section \ref{section:proof1}, we prove Theorem \ref{thm:main theorem}; in section \ref{section:proof2}, we prove Theorem \ref{thm:main theorem2}.
\subsection{Proof of Theorem \ref{thm:main theorem}}\label{section:proof1}
Given $\pi\in\partitionsn$, a set partition of $[n]$, we define the $n\times n$ matrix $(\pi_{ij},\,1\leq i,j\leq n)$, also denoted $\pi$, by
\[\pi_{ij}:=\left\{\begin{array}{cc}
1,&\, i\mbox{ and }j\mbox{ are in the same block of }\pi,\\
0,&\,\mbox{otherwise.}\end{array}\right.\]
In this way, the entries of the Hadamard product $M\cdot\pi$ coincide with the entries of $M$, unless a corresponding entry of $\pi$ is $0$.  Since any $\pi$, regarded as a matrix, is the image under conjugation by $\sigma\in\symmetricn$ of a block diagonal matrix, and the $\alpha$-permanent is invariant under conjugation by a permutation matrix, we can regard $M\cdot\pi$ as block diagonal.  If we call any $M\cdot\pi$ a {\em block diagonal projection} of $M$, the Permanent Decomposition Theorem (Theorem \ref{thm:main theorem}) states that the $\alpha\beta$-permanent of a matrix is a linear combination of $\alpha$-permanents of all its  block diagonal projections.  Also, it should be clear from \eqref{eq:alpha-permanent} that
\[\per_{\alpha}(M\cdot\pi)=\prod_{b\in\pi}\per_{\alpha}M[b],\]
where the product is over the blocks of $\pi$ and $M[b]$ denotes the sub-matrix of $M$ with rows and columns indexed by the elements of $b\subseteq[n]$.  Because the diagonal product $\prod_{j=1}^{n}M_{j,\sigma(j)}\pi_{j,\sigma(j)}=0$ for any $\sigma$ whose cycles are not a refinement of $\pi$, we have
\[\per_{\alpha}(M\cdot\pi)=\sum_{\sigma\leq\pi}\alpha^{\#\sigma}\prod_{j=1}^nM_{j,\sigma(j)},\]
where, for $\sigma\in\symmetricn$, we write $\sigma\leq\pi$ to denote that each cycle of $\sigma$ (as a subset of $[n]$) is a subset of some block of $\pi$; in other words, $\sigma\leq\pi$ if and only if the partition of $[n]$ induced by $\sigma$ is a refinement of $\pi$.

To prove \eqref{eq:main identity}, we begin with the right-hand side: let $\alpha,\beta\in\mathbb{C}$ and $M\in\mathbb{C}^{n\times n}$, then
\begin{eqnarray*}
\sum_{\pi\in\partitionsn}\beta^{\downarrow\#\pi}\per_{\alpha}(M\cdot\pi)&=&\sum_{\pi\in\partitionsn}\beta^{\downarrow\#\pi}\sum_{\sigma\leq\pi}\alpha^{\#\sigma}\prod_{j=1}^n M_{j,\sigma(j)}\\
&=&\sum_{\sigma\in\symmetricn}\alpha^{\#\sigma}\prod_{j=1}^{n}M_{j,\sigma(j)}\sum_{\pi\geq\sigma}\beta^{\downarrow \#\pi}\\
&=&\sum_{\sigma\in\symmetricn}\alpha^{\#\sigma}\prod_{j=1}^n M_{j,\sigma(j)}\sum_{j=1}^{\#\sigma}s(\#\sigma,j)\beta^{\downarrow j}\\
&=&\sum_{\sigma\in\symmetricn}(\alpha\beta)^{\#\sigma}\prod_{j=1}^n M_{j,\sigma(j)}\\
&=&\per_{\alpha\beta}M,
\end{eqnarray*}
where $s(n,k):=\#\{\mbox{partitions of }[n]\mbox{ with exactly }k\mbox{ blocks}\}$ is the $(n,k)$ Stirling number of the second kind, whose generating function is $x^n:=\sum_{k=1}^n s(n,k)x^{\downarrow k}$.  Identity \eqref{eq:main identity2} follows readily by taking $\alpha=\beta=-1$ in \eqref{eq:main identity}.

In the context of point processes, McCullagh and M\o ller \cite{McCullagh2005} show the special case of \eqref{eq:main identity} with $\beta=k\in\mathbb{N}$:
\begin{equation}\label{eq:infinitely divisible permanent}
\per_{k\alpha}M=\sum_{\pi\in\partitionsnk}k^{\downarrow\#\pi}\per_{\alpha}(M\cdot\pi),\end{equation}
where $\partitionsnk$ is the collection of partitions of $[n]$ with $k$ or fewer blocks.  They call this the {\em infinite divisibility property} of the $\alpha$-permanent.  Also, with $M=J_n$, the $n\times n$ matrix of all ones, \eqref{eq:main identity} relates closely to the Ewens sampling formula from population genetics \cite{Crane2013b,Ewens1972}.

More generally, we can take $\alpha=-1$ in \eqref{eq:main identity} and let $\beta\in\mathbb{C}$ be arbitrary, in which case we observe
\begin{equation}\label{eq:per det beta}\per_{\beta}M=(-1)^n\sum_{\pi\in\partitionsn}(-\beta)^{\downarrow\#\pi}\det(M\cdot\pi);\end{equation}
in particular, for $k\in\mathbb{N}$,
\[\per_{-k}M=\sum_{\pi\in\partitionsnk}k^{\downarrow\#\pi}\det(M\cdot\pi),\]
which plays a significant role in section \ref{section:computing}.

Immediately from \eqref{eq:main identity}, we also have
\[(-1)^n\det M=\sum_{\pi\in\partitionsn}(-1/\alpha)^{\downarrow\#\pi}\per_{\alpha}(M\cdot\pi).\]
Previously, Vere-Jones \cite{VereJones1988} obtained an expansion for the determinant as a sum of $\alpha$-permanents, whose form hearkens the MacMahon Master Theorem.

\subsection{Proof of Theorem \ref{thm:main theorem2}}\label{section:proof2}
\begin{proof}[Proof of identity \eqref{eq:permanent sum identity}]
Given $b\subseteq[n]$, we write $I_b$ to denote the $n\times n$ diagonal matrix with $(i,i)$ entry
\[I_b(i,i):=\left\{\begin{array}{cc}
	1, & i\in b\\
	0, & \mbox{otherwise.}
	\end{array}\right.
	\]
	With this notation, $AI_b+BI_{b^c}$ denotes the $n\times n$ matrix whose $i$th row is the $i$th row of $A$ if $i\in b$ or the $i$th row of $B$ otherwise.
	
	We prove \eqref{eq:permanent sum identity} using induction and the cofactor expansion of the $\alpha$-permanent.  For $M\in\mathbb{C}^{n\times n}$ and $1\leq i,j\leq n$, we write $M^{(i,j)}$ to denote the $(n-1)\times(n-1)$ matrix obtained from $M$ by removing its $i$th row and $j$th column.  For any $1\leq i\leq n$, the cofactor expansion of $\per_{\alpha}M$ along the $i$th row is
	\[\per_{\alpha}M=\alpha M_{i,i}\per_{\alpha}M^{(i,i)}+\sum_{j\neq i}M_{i,j}\per_{\alpha}M^{(i,j)}.\]
	For $n=1$, \eqref{eq:permanent sum identity} clearly holds and we assume it holds for $n>1$.
	
	For convenience, we write $M:=A+B$ and, since it is understood that we are expanding along the $(n+1)$st row of $M$, we write $M^{j}:=M^{(n+1,j)}$, and likewise for $A$ and $B$.  We also use the shorthand $[A,B]_{a,b}:=AI_a+BI_b$, with $[A,B]_{a}:=[A,B]_{a,a^c}$, when $b=a^c$ is the complement of $a$ in $[n+1]$.
By induction, we have
\begin{eqnarray*}
\per_{\alpha}(A+B)&=&\alpha M_{n+1,n+1}\per_{\alpha}M^{n+1}+\sum_{j=1}^n M_{n+1,j}\per_{\alpha}M^{j}\\
&=&\alpha M_{n+1,n+1}\sum_{b\subseteq[n]}\per_{\alpha}[A^{n+1},B^{n+1}]_{b,b^c\cap[n]}+\sum_{j=1}^n M_{n+1,j}\sum_{b\subseteq[n]\backslash\{j\}}\per_{\alpha}[A^{j},B^{j}]_{b,b^c\backslash\{j\}}\\
&=&\alpha A_{n+1,n+1}\sum_{b\subseteq[n]}\per_{\alpha}[A^{n+1},B^{n+1}]_{b,b^c\cap[n]}+\sum_{j=1}^n A_{n+1,j}\sum_{b\subseteq[n]\backslash\{j\}}\per_{\alpha}[A^{j},B^{j}]_{b,b^c\backslash\{j\}}+\\
&&\quad\quad\quad+\alpha B_{n+1,n+1}\sum_{b\subseteq[n]}\per_{\alpha}[A^{n+1},B^{n+1}]_{b,b^c\cap[n]}+\sum_{j=1}^n B_{n+1,j}\sum_{b\subseteq[n]\backslash\{j\}}\per_{\alpha}[A^{j},B^{j}]_{b,b^c\backslash\{j\}}\\
&=&\sum_{b\subseteq[n+1]:n+1\in b}\per_{\alpha}[A,B]_{b}+\sum_{b\subseteq[n+1]:n+1\notin b}\per_{\alpha}[A,B]_{b}\\
&=&\sum_{b\subseteq[n+1]}\per_{\alpha}[A,B]_b.
\end{eqnarray*}
\end{proof}
\begin{cor}
For $A\in\mathbb{C}^{n\times n}$ and $\alpha\in\mathbb{C}$,
\[\per_{\alpha}(A+I_n)=\sum_{b\subseteq[n]}\alpha^{n-\#b}\per_{\alpha}A[b],\]
where $I_n$ is the $n\times n$ identity matrix.  In particular,
\begin{equation}\label{eq:determinant sum identity}\det(A+I_n)=\sum_{b\subseteq[n]}\det A[b]\end{equation}
and
\[\per(A+I_n)=\sum_{b\subseteq[n]}\per A[b].\]
\end{cor}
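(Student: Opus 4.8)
The plan is to specialize the sum identity \eqref{eq:permanent sum identity} by taking $B=I_n$ and then to evaluate each summand explicitly. Writing $C:=[A,I_n]_b=AI_b+I_nI_{b^c}$ for the matrix whose $i$th row agrees with that of $A$ when $i\in b$ and equals the $i$th row $e_i^{\top}$ of $I_n$ when $i\in b^c$, identity \eqref{eq:permanent sum identity} gives $\per_{\alpha}(A+I_n)=\sum_{b\subseteq[n]}\per_{\alpha}C$, so the whole statement reduces to showing that $\per_{\alpha}C=\alpha^{n-\#b}\per_{\alpha}A[b]$.

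To prove this, I would expand $\per_{\alpha}C$ directly from the definition \eqref{eq:alpha-permanent} and ask which $\sigma\in\symmetricn$ contribute a nonzero diagonal product $\prod_{j}C_{j,\sigma(j)}$. Since row $i$ of $C$ has its only nonzero entry in column $i$ whenever $i\in b^c$, this product vanishes unless $\sigma(i)=i$ for every $i\in b^c$; equivalently, $\sigma$ must fix $b^c$ pointwise and restrict to a permutation $\tau$ of $b$. For such $\sigma$ the diagonal product collapses to $\prod_{j\in b}A_{j,\tau(j)}$, while the cycle count splits as $\#\sigma=\#\tau+(n-\#b)$, the $n-\#b$ extra cycles being the forced fixed points in $b^c$. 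Summing over $\tau$ then yields $\per_{\alpha}C=\alpha^{n-\#b}\sum_{\tau}\alpha^{\#\tau}\prod_{j\in b}A_{j,\tau(j)}=\alpha^{n-\#b}\per_{\alpha}A[b]$, which is exactly the claimed formula.

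The two displayed special cases follow by substitution. Taking $\alpha=1$ is immediate, since $\per_1=\per$ and every weight $\alpha^{n-\#b}$ becomes $1$. For the determinant I would pass through the relation $\det M=\per_{-1}(-M)$, which for an $m\times m$ matrix reads $\per_{-1}M=(-1)^m\det M$; applying this to $A+I_n$ (size $n$) on the left and to each $A[b]$ (size $\#b$) on the right of the general formula at $\alpha=-1$ produces $(-1)^n\det(A+I_n)=\sum_{b}(-1)^{n-\#b}(-1)^{\#b}\det A[b]=(-1)^n\sum_b\det A[b]$, and cancelling $(-1)^n$ gives \eqref{eq:determinant sum identity}.

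The only substantive work is the summand evaluation $\per_{\alpha}C=\alpha^{n-\#b}\per_{\alpha}A[b]$, and within it the delicate point is the bookkeeping of the cycle count: one must recognize that the $n-\#b$ coordinates of $b^c$ are \emph{forced} to be fixed points and hence each contribute a factor $\alpha$, accounting precisely for the prefactor $\alpha^{n-\#b}$ rather than leaving it absorbed into $\per_{\alpha}A[b]$. The determinant specialization is then purely a matter of tracking the two sign contributions $(-1)^{n-\#b}$ and $(-1)^{\#b}$ and verifying that they combine to the uniform $(-1)^n$ that cancels on both sides.
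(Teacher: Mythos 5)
Your proof is correct and follows exactly the route the paper intends: the corollary is stated as an immediate specialization of identity \eqref{eq:permanent sum identity} with $B=I_n$, and your evaluation $\per_{\alpha}(AI_b+I_nI_{b^c})=\alpha^{n-\#b}\per_{\alpha}A[b]$ (forced fixed points on $b^c$ contributing $\alpha^{n-\#b}$) together with the sign bookkeeping via $\per_{-1}M=(-1)^m\det M$ supplies precisely the details the paper leaves implicit.
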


\begin{rmk}
A similar identity to \eqref{eq:permanent sum identity} is given in Theorem 1.4 (chapter 2) of \cite{MincPermanent}, but, even in the case $\alpha=1$, that identity differs slightly.  We also know, through personal communications, that \eqref{eq:determinant sum identity} is known within the context of determinantal processes, and presumably more widely.  Regardless, we do not formally know of any identities that resemble \eqref{eq:permanent sum identity}, even in special cases.
\end{rmk}

\begin{proof}[Proof of identity \eqref{eq:permanent product identity}]
To prove \eqref{eq:permanent product identity}, we use the cofactor expansion of the $\alpha$-permanent together with induction, as in the proof of \eqref{eq:permanent sum identity}.  Let $A,B\in\mathbb{C}^{n\times n}$ and $\alpha\in\mathbb{C}$.  For $n=1$, \eqref{eq:permanent product identity} is trivially true.  In the induction step, we assume \eqref{eq:permanent product identity} holds for $n>1$.

For $x=(x_1,\ldots,x_n)\in[n]^n$, we write $B_x$ to denote the $n\times n$ matrix whose $(i,j)$ entry is $B_x(i,j):=B(x_i,j)$, i.e.\ the $i$th row of $B_x$ is the $x_i$th row of $B$.  We write $M:=AB$ and, again, $M^j:=M^{(n+1,j)}$ denotes the submatrix of $M$ obtained by removing the $n+1$st row and $j$th column.  We have the following for $A,B\in\mathbb{C}^{(n+1)\times(n+1)}$.
\begin{eqnarray*}
\per_{\alpha}(AB)&=&\alpha M_{n+1,n+1}\per_{\alpha} M^{n+1}+\sum_{j=1}^n M_{n+1,j}\per_{\alpha}M^j\\
&=&\alpha M_{n+1,n+1}\sum_{x\in[n]^n}\per_{\alpha}B^{n+1}_x\prod_{k=1}^n A_{k,x_k}+\sum_{j=1}^nM_{n+1,j}\sum_{x\in[n]^n}\per_{\alpha}B^{n+1}_x\prod_{k=1}^nA_{k,x_k}\\
&=&\alpha\left(\sum_{i=1}^{n+1}A_{n+1,i}B_{i,n+1}\right)\sum_{x\in[n]^n}\per_{\alpha}B^{n+1}_x\prod_{k=1}^nA_{k,x_k}+\sum_{j=1}^{n}\left(\sum_{i=1}^{n+1}A_{n+1,i}B_{i,j}\right) \sum_{x\in[n]^n}\per_{\alpha}B^{n+1}_x\prod_{k=1}^nA_{k,x_k}\\
&=&\alpha\left(\sum_{i=1}^{n+1}A_{n+1,i}B_{i,n+1}\right)\sum_{x\in[n]^n}\per_{\alpha}B^{n+1}_x\prod_{k=1}^nA_{k,x_k}+ \sum_{i=1}^{n+1}A_{n+1,i}\sum_{j=1}^{n}B_{i,j}\sum_{x\in[n]^n} \per_{\alpha}B^{n+1}_x\prod_{k=1}^nA_{k,x_k}\\
&=&\sum_{i=1}^{n+1}A_{n+1,i}\sum_{j=1}^{n+1}B_{i,j}(1+(\alpha-1)\delta_{j,n+1})\sum_{x\in[n]^n}\per_{\alpha}B^{n+1}_x\prod_{k=1}^nA_{k,x_k}\\
&=&\sum_{i=1}^{n+1}\sum_{x\in[n+1]^{n+1}:x_{n+1}=i}\per_{\alpha}B_x\prod_{k=1}^{n+1}A_{k,x_k}\\
&=&\sum_{x\in[n+1]^{n+1}}\per_{\alpha}B_x\prod_{k=1}^{n+1}A_{k,x_k}.
\end{eqnarray*}
This completes the proof.
\end{proof}
\begin{rmk}
Theorem 1.3 (chapter 2) in Minc \cite{MincPermanent} restates a theorem of Binet and Cauchy for the permanent of a product of matrices, but that identity is different than \eqref{eq:permanent product identity}.  Both \eqref{eq:permanent sum identity} and \eqref{eq:permanent product identity} are more suitable to statistical applications than their counterparts in \cite{MincPermanent}, because they give the explicit normalizing constant of a probability distribution on subsets of $[n]$, which is relevant to various applications involving clustering.
\end{rmk}

\subsection{Immanants and the $\alpha$-permanent}\label{section:immanants}
In representation theory, the permanent corresponds to the {\em immanant} associated to the trivial representation of $\symmetricn$, while the determinant is the immanant corresponding to the alternating representation, the only other one-dimensional representation of $\symmetricn$.  With $\lambda\dashv n$ indicating that $\lambda$ is a partition of the integer $n$, the immanant indexed by any $\lambda\dashv n$ is
\begin{equation}\label{eq:immanant}
\Im_{\lambda} M:=\sum_{\sigma\in\symmetricn}\chi_{\lambda}(\sigma)\prod_{j=1}^{n}M_{j,\sigma(j)},\end{equation}
where $\chi_{\lambda}(\sigma)$ is the character associated to the irreducible representation $S^{\lambda}$, the Specht module corresponding $\lambda\dashv n$; see \cite{FultonHarris}.  Aside from $\alpha=\pm1$, the $\alpha$-permanent \eqref{eq:alpha-permanent} does not correspond to an immanant of a particular index.  However, there is a precise relationship between \eqref{eq:alpha-permanent} and \eqref{eq:immanant}, which we now discuss.
\subsubsection{Characters and immanants}

For each $\lambda\dashv n$, the character $\chi_{\lambda}:\symmetricn\rightarrow\mathbb{R}$ associated to the irreducible representation $\rho_{\lambda}$ is a {\em class function}, i.e.\ $\chi_{\lambda}(\sigma)=\chi_{\lambda}(\tau\sigma\tau^{-1})$ for all $\sigma,\tau\in\symmetricn$.  The conjugacy classes of $\symmetricn$ correspond to $\integerpartitions:=\{\lambda\dashv n\}$, the collection of integer partitions of $n$,
\[\integerpartitions:=\left\{(n_1,\ldots,n_k):\,n_1\geq\cdots\geq n_k>0,\,n_1+\cdots+n_k=n\right\}.\]
  Moreover, the collection $(\chi_{\lambda},\,\lambda\in\integerpartitions)$ is orthnormal and forms a basis for the space of class functions on $\symmetricn$.  The $\alpha$-permanent is a polynomial in $\alpha$ and, though $\alpha^{\#\bullet}$ is not a character, it is a class function.  Hence, for every $\alpha$ there exist constants $(c_{\lambda}^{\alpha},\,\lambda\in\integerpartitions)$ such that
\begin{equation}\label{eq:linear comb imm}
\per_{\alpha}M=\sum_{\lambda\dashv n}c_{\lambda}^{\alpha}\Im_{\lambda}M.\end{equation}
Special cases are $\alpha=\pm1$, for which $c_{\lambda}^{1}=\delta_{\lambda,(n)}$, the Dirac mass at $\lambda=(n)$, and $c_{\lambda}^{-1}=(-1)^n\delta_{\lambda,1^n}$, the Dirac mass at $\lambda=1^n:=(1,\ldots,1)\in\integerpartitions$.  

For $n\in\mathbb{N}$, let $\mathbf{X}:=(X_{\lambda}(\nu),\,\lambda,\nu\dashv n)$ be a $\integerpartitions\times\integerpartitions$ matrix, i.e.\ its rows and columns are labeled by $\integerpartitions$, with $(\lambda,\nu)$ entry equal the value the $\lambda$-character takes on the conjugacy class indexed by $\nu$.  For any $(\lambda,\nu)\in\integerpartitions\times\integerpartitions$, we write $\mathbf{X}^{(\lambda,\nu)}$ to denote the minor determinant of $\mathbf{X}$ with $\lambda$ row and $\nu$ column removed.  We then define $\mathbf{Y}:=(\mathbf{X}^{(\lambda,\nu)},\,\lambda,\nu\dashv n)$ as the matrix of minor determinants.

\begin{thm}
For every $\lambda\in\integerpartitions$, define the function $c_{\lambda}:\mathbb{C}\rightarrow\mathbb{C}$ by
\begin{equation}\label{eq:lambda fct}
c_{\lambda}(\alpha):=\frac{1}{n!}\sum_{\sigma\in\symmetricn}\alpha^{\#\sigma}\chi_{\lambda}(\sigma).\end{equation}
Then the $\alpha$-permanent satisfies
\[\per_{\alpha}M=\sum_{\lambda\in\integerpartitions}c_{\lambda}(\alpha)\Im_{\lambda}M\]
 for all $\alpha\in\mathbb{C}$ and $M\in\mathbb{C}^{n\times n}$.  Alternatively, $\mathbf{c}:=(c_{\lambda}(\alpha),\,\lambda\dashv n)$ solves the equation
\begin{equation}\label{eq:linear system}\mathbf{A}=\mathbf{X}\mathbf{c},\end{equation}
where $\mathbf{X}$ is defined before the theorem, $\mathbf{A}:=(\alpha^{\#\lambda},\,\lambda\dashv n)$ is a $\integerpartitions\times1$ column vector and $\#\lambda$ denotes the number of parts of $\lambda$.  The solution to \eqref{eq:linear system} is
\begin{equation}\label{eq:linear system solution}
\mathbf{c}=\mathbf{Y}\mathbf{A}/\det\mathbf{X},\end{equation}
where $\mathbf{Y}$ is defined above.
\end{thm}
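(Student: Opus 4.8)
The plan is to recognize $\alpha^{\#\sigma}$ as a class function on $\symmetricn$ and to expand it in the orthonormal basis of irreducible characters. Under the standard inner product $\langle f,g\rangle:=\frac{1}{n!}\sum_{\sigma\in\symmetricn}f(\sigma)\overline{g(\sigma)}$, the family $(\chi_\lambda,\,\lambda\dashv n)$ is orthonormal and spans the space of class functions, as recalled just before the theorem. Since $\#\sigma$ depends only on the cycle type of $\sigma$, the map $\sigma\mapsto\alpha^{\#\sigma}$ is constant on conjugacy classes, hence a class function for each fixed $\alpha$. Therefore it admits a unique expansion $\alpha^{\#\sigma}=\sum_{\lambda\dashv n}c_\lambda(\alpha)\chi_\lambda(\sigma)$ whose coefficients are the inner products $c_\lambda(\alpha)=\langle\alpha^{\#\bullet},\chi_\lambda\rangle$. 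Because the character values of $\symmetricn$ are real, $\overline{\chi_\lambda(\sigma)}=\chi_\lambda(\sigma)$, and this inner product is exactly \eqref{eq:lambda fct}.

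The immanant expansion then follows by a direct substitution. Inserting the character expansion of $\alpha^{\#\sigma}$ into the definition \eqref{eq:alpha-permanent} and interchanging the two finite sums gives
\[\per_\alpha M=\sum_{\sigma\in\symmetricn}\Big(\sum_{\lambda\dashv n}c_\lambda(\alpha)\chi_\lambda(\sigma)\Big)\prod_{j=1}^n M_{j,\sigma(j)}=\sum_{\lambda\dashv n}c_\lambda(\alpha)\sum_{\sigma\in\symmetricn}\chi_\lambda(\sigma)\prod_{j=1}^n M_{j,\sigma(j)},\]
and the inner sum is precisely $\Im_\lambda M$ by \eqref{eq:immanant}. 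This proves the first assertion and at the same time identifies the coefficients $c_\lambda^\alpha$ of \eqref{eq:linear comb imm} as the functions $c_\lambda(\alpha)$.

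For the linear-system reformulation I would evaluate the identity $\alpha^{\#\sigma}=\sum_{\lambda\dashv n}c_\lambda(\alpha)\chi_\lambda(\sigma)$ on a representative of each conjugacy class. Since the conjugacy classes of $\symmetricn$ are indexed by $\integerpartitions$, and $\alpha^{\#\sigma}=\alpha^{\#\nu}$ whenever $\sigma$ has cycle type $\nu$, collecting one equation per class $\nu\dashv n$ yields the square system $\alpha^{\#\nu}=\sum_{\lambda\dashv n}\chi_\lambda(\nu)\,c_\lambda(\alpha)$, which in matrix form is $\mathbf{A}=\mathbf{X}\mathbf{c}$ once $\mathbf{X}$ is read off with the stated indexing. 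It then remains to invert $\mathbf{X}$. The crucial point is that $\det\mathbf{X}\neq0$: this follows from the (column) orthogonality relations for the character table, which exhibit a suitably normalized $\mathbf{X}$ as unitary and hence nonsingular. Granting nonsingularity, Cramer's rule---equivalently the adjugate formula $\mathbf{X}^{-1}=\mathrm{adj}\,\mathbf{X}/\det\mathbf{X}$---writes each $c_\lambda(\alpha)$ as a ratio of determinants, which is the content of \eqref{eq:linear system solution}.

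The routine ingredients---interchanging finite sums and computing an inner product---are not where the difficulty lies, since the orthonormality and completeness of the characters are available as quoted facts. The step that most warrants care is the second assertion: one must reconcile the bookkeeping between $\mathbf{Y}=(\mathbf{X}^{(\lambda,\nu)})$, a matrix of \emph{unsigned} minor determinants, and the genuine adjugate, which carries the checkerboard signs $(-1)^{\lambda+\nu}$ and a transpose, as well as the transpose convention relating the derived system to $\mathbf{A}=\mathbf{X}\mathbf{c}$. The main obstacle is thus to verify that, under the paper's conventions, $\mathbf{Y}\mathbf{A}/\det\mathbf{X}$ indeed reproduces $\mathbf{X}^{-1}\mathbf{A}$, and to confirm $\det\mathbf{X}\neq0$ so that the expression is well defined.
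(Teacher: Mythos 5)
Your proof of the main assertions is correct and is essentially the paper's own argument: both rest on recognizing $\alpha^{\#\bullet}$ as a class function, expanding it in the orthonormal character basis with coefficients $c_{\lambda}(\alpha)=\frac{1}{n!}\sum_{\sigma\in\symmetricn}\alpha^{\#\sigma}\chi_{\lambda}(\sigma)$ (the paper phrases this as Fourier inversion applied to \eqref{eq:linear comb char}, you phrase it as taking inner products --- the same thing, since the characters of $\symmetricn$ are real), and then interchanging the two finite sums to obtain $\per_{\alpha}M=\sum_{\lambda\dashv n}c_{\lambda}(\alpha)\Im_{\lambda}M$. Deriving the square linear system by evaluating \eqref{eq:linear comb char} on one representative per conjugacy class is likewise the intended argument.

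Where your proposal stops short is the final claim \eqref{eq:linear system solution}: you reduce it to checking that $\mathbf{Y}\mathbf{A}/\det\mathbf{X}$ agrees with the Cramer's-rule solution, call that the ``main obstacle,'' and leave it unverified; as a proof of the theorem as stated, that is a gap. For comparison, the paper closes this step by asserting $\mathbf{Y}\mathbf{X}=\det(\mathbf{X})I_{\integerpartitions}$, reading the $(\lambda,\nu)$ entry as a Laplace-type expansion equal to $\det\mathbf{X}$ when $\lambda=\nu$ and to $0$ otherwise, with $\det\mathbf{X}\neq0$ from orthogonality of characters. But your hesitation is exactly right, and it is worth saying so explicitly: with $\mathbf{Y}$ literally the matrix of \emph{unsigned}, untransposed minors, the identity $\mathbf{Y}\mathbf{X}=\det(\mathbf{X})I_{\integerpartitions}$ is false in general --- Laplace expansion requires the checkerboard signs, and the adjugate is the \emph{transposed} cofactor matrix; one can check for the character table of $\mathcal{S}_3$ (rows and columns in the same natural order) that $\mathbf{Y}\mathbf{X}$ has nonzero off-diagonal entries. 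Moreover, under the paper's indexing $\mathbf{X}_{\lambda\nu}=\chi_{\lambda}(\nu)$, evaluating \eqref{eq:linear comb char} at cycle type $\lambda$ gives $\alpha^{\#\lambda}=\sum_{\nu\dashv n}c_{\nu}(\alpha)\chi_{\nu}(\lambda)$, i.e.\ $\mathbf{A}=\mathbf{X}^{T}\mathbf{c}$ rather than $\mathbf{A}=\mathbf{X}\mathbf{c}$, since the character table of $\symmetricn$ is not symmetric. So the step you flagged cannot be ``verified'' as literally stated: it is an erratum in the theorem's bookkeeping, which the paper's one-line appeal to orthogonality glosses over. The correct completion of your argument is the one you outline: establish $\mathrm{adj}(\mathbf{X}^{T})\mathbf{X}^{T}=\det(\mathbf{X})I_{\integerpartitions}$ (the repeated-column argument, with the signs included), conclude $\mathbf{c}=\mathrm{adj}(\mathbf{X}^{T})\mathbf{A}/\det\mathbf{X}$, and note that \eqref{eq:linear system} and \eqref{eq:linear system solution} hold once $\mathbf{X}$ is replaced by $\mathbf{X}^{T}$ and $\mathbf{Y}$ is read as the corresponding signed, transposed minor (adjugate) matrix.
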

\begin{proof}
That such a function $c_{\lambda}$ exists is a consequence of \eqref{eq:linear comb imm} and the surrounding discussion.  To obtain \eqref{eq:lambda fct}, we note that
\[\per_{\alpha}M=\sum_{\sigma\in\symmetricn}\alpha^{\#\sigma}\prod_{j=1}^n M_{j,\sigma(j)}=\sum_{\sigma\in\symmetricn}\sum_{\lambda\dashv n}c_{\lambda}(\alpha)\chi_{\lambda}(\sigma)\prod_{j=1}^n M_{j,\sigma(j)}\quad\mbox{for all }M\in\mathbb{C}^{n\times n};\]
whence,
\begin{equation}\label{eq:linear comb char}
\alpha^{\#\sigma}=\sum_{\lambda\dashv n}c_{\lambda}(\alpha)\chi_{\lambda}(\sigma),\quad\mbox{for all }\sigma\in\symmetricn.\end{equation}
The Fourier inversion theorem gives \eqref{eq:lambda fct}.

To see \eqref{eq:linear system solution}, it is enough to show that $\mathbf{Y}\mathbf{X}=\det(\mathbf{X})I_{\integerpartitions}$, where $I_{\integerpartitions}$ is the $\integerpartitions\times\integerpartitions$ identity matrix.  Indeed, by orthogonality of characters,
\[(\mathbf{Y}\mathbf{X})_{\lambda\nu}=\sum_{\mu\dashv n}\chi_{\lambda}(\mu)\mathbf{X}^{(\mu,\nu)}=\left\{\begin{array}{cc}\det\mathbf{X},&\lambda=\nu\\
0,& \mbox{otherwise,}\end{array}\right.\]
for all $\lambda,\nu\dashv n$.  Orthogonality of characters, and hence the columns of $\mathbf{X}$, implies $\det\mathbf{X}\neq0$, completing the proof.
\end{proof}
\begin{cor}
For any $\alpha,\beta\in\mathbb{C}$,
\begin{equation}\label{eq:permanent immanant identity}
\per_{\alpha}M=\sum_{\lambda\dashv n}c_{\lambda}^{-\beta}\sum_{\pi\in\partitionsn}(-\alpha/\beta)^{\downarrow\#\pi}\Im_{\lambda}(M\cdot\pi),\end{equation}
where $c_{\lambda}^{-\beta}:=c_{\lambda}(-\beta)$ is defined in \eqref{eq:lambda fct}.
\end{cor}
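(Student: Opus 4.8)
The plan is to derive \eqref{eq:permanent immanant identity} by combining the immanant expansion established in the theorem preceding this corollary with the Permanent Decomposition Theorem (Theorem \ref{thm:main theorem}). The guiding observation is that the inner summation over $\lambda\dashv n$ on the right-hand side of \eqref{eq:permanent immanant identity}, with its coefficients $c_{\lambda}^{-\beta}=c_{\lambda}(-\beta)$, is exactly the immanant expansion of a $(-\beta)$-permanent evaluated at the block-diagonal projection $M\cdot\pi$. So the whole right-hand side should collapse, after reordering the sums, into a single instance of \eqref{eq:main identity}.

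Concretely, I would first apply the preceding theorem with its parameter $\alpha$ replaced by $-\beta$ to each matrix $M\cdot\pi$, which gives
\[\per_{-\beta}(M\cdot\pi)=\sum_{\lambda\dashv n}c_{\lambda}(-\beta)\Im_{\lambda}(M\cdot\pi).\]
Substituting this into the right-hand side of \eqref{eq:permanent immanant identity} and interchanging the two finite sums (over $\lambda$ and over $\pi$) rewrites that right-hand side as
\[\sum_{\pi\in\partitionsn}(-\alpha/\beta)^{\downarrow\#\pi}\per_{-\beta}(M\cdot\pi).\]

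It then remains to recognize this expression as a special case of the Permanent Decomposition Theorem. Applying \eqref{eq:main identity} with the role of its $\alpha$ played by $-\beta$ and the role of its $\beta$ played by $-\alpha/\beta$ yields
\[\per_{(-\beta)(-\alpha/\beta)}M=\sum_{\pi\in\partitionsn}(-\alpha/\beta)^{\downarrow\#\pi}\per_{-\beta}(M\cdot\pi),\]
and since $(-\beta)(-\alpha/\beta)=\alpha$, the left-hand side is precisely $\per_{\alpha}M$. Chaining the two displays completes the derivation.

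There is no genuinely hard step here: the argument is purely substitution and reindexing, and both sums being finite means the interchange of summation raises no convergence issue. The only point requiring care is the factor $-\alpha/\beta$, which forces $\beta\neq0$ for the falling factorial $(-\alpha/\beta)^{\downarrow\#\pi}$ to be meaningful; I would therefore read the corollary as asserting the identity for $\beta\neq0$, the case $\beta=0$ being degenerate. It is also worth remarking that the identity is really the composition of the two structural results already proved, so its content is that immanant expansion and block-diagonal decomposition commute in the expected way.
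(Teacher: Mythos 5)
Your proof is correct and is essentially the paper's own argument: the paper proves the corollary in one line by "combining \eqref{eq:main identity} and \eqref{eq:linear comb imm}," which is exactly your substitution of the immanant expansion (with parameter $-\beta$) into the Permanent Decomposition Theorem applied with parameters $-\beta$ and $-\alpha/\beta$. Your remark that $\beta\neq 0$ is required for $(-\alpha/\beta)^{\downarrow\#\pi}$ to be meaningful is a fair refinement of the corollary's ``for any $\alpha,\beta\in\mathbb{C}$'' phrasing, which the paper glosses over.
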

\begin{proof}
Immediately, by combining \eqref{eq:main identity} and \eqref{eq:linear comb imm}, we obtain the $\alpha$-permanent as an explicit linear combination of immanants, in terms of the coefficients $(c_{\lambda}(\beta),\,\lambda\dashv n)$, for arbitrary $\beta\in\mathbb{C}$, yielding \eqref{eq:permanent immanant identity}.
\end{proof}
From \eqref{eq:permanent immanant identity}, we can write
\[\per_{\beta}(M\cdot \pi)=\sum_{\sigma\leq\pi}\prod_{j=1}^n M_{j,\sigma(j)}\sum_{\lambda\dashv n}c_{\lambda}^{\beta}\chi_{\lambda}(\sigma)\]
For $\sigma\in\symmetricn$ and $\pi\in\partitionsn$, we write $\sigma\sim\pi$ to denote that the cycles of $\sigma$ correspond to the blocks of $\pi$; and, since $\chi_{\lambda}$ is a class function, we define $\chi_{\lambda}(\pi)$ to be the common value that $\chi_{\lambda}$ takes on $\{\sigma\sim\pi\}$.
Defining $f(\pi)=\per_{\beta}(M\cdot\pi)$ and $g(\pi)=\sum_{\sigma\sim\pi}\prod_{j=1}^n M_{j,\sigma(j)}\sum_{\lambda\dashv n}c_{\lambda}^{\beta}\chi_{\lambda}(\pi)$, we obtain \begin{equation}\label{eq:Mobius inversion}
\left[\sum_{\sigma\sim\pi}\prod_{j=1}^n M_{j,\sigma(j)}\right]\sum_{\nu\dashv n}\sum_{\sigma\in\symmetricn}\beta^{\#\sigma}\chi_{\nu}(\pi)\chi_{\nu}(\sigma)=\sum_{\pi'\leq\pi}\per_{\beta}(M\cdot\pi')\prod_{b\in\pi}(-1)^{\#\pi'_{|b}-1}(\#\pi'_{|b}-1)!
\end{equation} as a consequence of the M\"obius inversion formula (\cite{StanleyI}, proposition 3.7.1 and example 3.10.4).  As a special case, when $M=J_n$ in \eqref{eq:Mobius inversion}, 
\[\prod_{b\in\pi}(\#b-1)!\sum_{\nu\dashv n}\sum_{\sigma\in\symmetricn}\beta^{\#\sigma}\chi_{\nu}(\pi)\chi_{\nu}(\sigma)=\sum_{\pi'\leq\pi}\prod_{b\in\pi'}\beta^{\uparrow\#b}\prod_{b\in\pi}(-1)^{\#\pi'_{|b}-1}(\#\pi'_{|b}-1)!.\]

\section{Computing the $\alpha$-permanent}\label{section:computing}
\subsection{Permanents of special matrices}\label{section:special matrices}
Although the $\alpha$-permanent is difficult to compute in general, it can be computed explicitly for some specially structured matrices.  For example, for any $\sigma\in\symmetricn$, let $P_{\sigma}$ be its matrix representation.  Then $\per_{\alpha} P_{\sigma}=\alpha^{\#\sigma}$.  Also, if, as before, we regard $\pi\in\partitionsn$ as a matrix,
then 
\[\per_{\alpha}\pi=\prod_{b\in\pi}\alpha^{\uparrow\#b},\]
the product over the blocks of the rising factorial $\alpha^{\uparrow\#b}:=\alpha(\alpha+1)\cdots(\alpha+\#b-1)$.  Finally, for a $2\times 2$ matrix
\[A:=\begin{pmatrix}
A_{11} & A_{12}\\
A_{21} & A_{22}\end{pmatrix},\]
define $A[n_1,n_2]$ as the $(n_1+n_2)\times(n_1+n_2)$ {\em block matrix} with $(i,j)$ entry
\[A[n_1,n_2](i,j):=\left\{\begin{array}{cc}
A_{11}, & i,j\leq n_1\\
A_{22}, & i,j>n_1\\
A_{12}, & i\leq n_1,\, j>n_1\\
A_{21}, & i>n_1,\,j\leq n_1.\end{array}\right.\]
For example,
\[A[2,3]:=\begin{pmatrix}
A_{11} & A_{11} & A_{12} & A_{12} & A_{12}\\
A_{11} & A_{11} & A_{12} & A_{12} & A_{12}\\
A_{21} & A_{21} & A_{22} & A_{22} & A_{22}\\
A_{21} & A_{21} & A_{22} & A_{22} & A_{22}\\
A_{21} & A_{21} & A_{22} & A_{22} & A_{22}
\end{pmatrix}.\]
Such block matrices arise naturally in statistical modeling, and the following identity was obtained in \cite{RubakMollerMcC2010}.
\begin{prop}[Proposition 1 in A.2 \cite{RubakMollerMcC2010}]
Let $A$ be $2\times 2$ as above and define 
\[\rho:=\frac{A_{11}A_{22}}{A_{12}A_{21}}.\]
Then 
\[\per_{\alpha}A[n_1,n_2]=A_{11}^{n_1}A_{22}^{n_2}\alpha^{\uparrow n_1}\alpha^{\uparrow n_2}\sum_{j=0}^{n_1\wedge n_2}\frac{n_1^{\downarrow j}n_2^{\downarrow j}\rho^j}{j!\alpha^{\uparrow j}},\]
where $\alpha^{\uparrow n_1}\alpha^{\uparrow n_2}/\alpha^{\uparrow j}:=0$ whenever the quantity is undefined.
\end{prop}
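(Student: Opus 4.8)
The plan is to read $\per_{\alpha}A[n_1,n_2]$ straight off the definition \eqref{eq:alpha-permanent} and collapse everything to a single cycle‑weighted count. Write $B_1:=\{1,\dots,n_1\}$ and $B_2:=\{n_1+1,\dots,n_1+n_2\}$, and for $\sigma\in\symmetric_{n}$ ($n=n_1+n_2$) set $a_{kl}:=\#\{i\in B_k:\sigma(i)\in B_l\}$. The diagonal product $\prod_{i}A[n_1,n_2]_{i,\sigma(i)}=A_{11}^{a_{11}}A_{12}^{a_{12}}A_{21}^{a_{21}}A_{22}^{a_{22}}$ depends on $\sigma$ only through these counts, and since $\sigma$ is a bijection, $a_{11}+a_{12}=a_{11}+a_{21}=n_1$, whence $a_{12}=a_{21}=:j$, $a_{11}=n_1-j$, $a_{22}=n_2-j$. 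Thus the product equals $A_{11}^{n_1-j}A_{22}^{n_2-j}(A_{12}A_{21})^{j}$ and
\[\per_{\alpha}A[n_1,n_2]=\sum_{j=0}^{n_1\wedge n_2}A_{11}^{n_1-j}A_{22}^{n_2-j}(A_{12}A_{21})^{j}\,W_j,\qquad W_j:=\sum_{\sigma:\,a_{12}(\sigma)=j}\alpha^{\#\sigma}.\]
Factoring out $A_{11}^{n_1}A_{22}^{n_2}$ turns the $j$th coefficient into a power of the ratio $\rho=A_{11}A_{22}/(A_{12}A_{21})$, so the proposition reduces to the single identity $W_j=\alpha^{\uparrow n_1}\alpha^{\uparrow n_2}\,n_1^{\downarrow j}n_2^{\downarrow j}/(j!\,\alpha^{\uparrow j})$.

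The heart of the proof is this $\alpha$-weighted enumeration of permutations with exactly $j$ block‑crossings, which I would carry out by the first‑return (induced) map to $B_1$. For $x\in B_1$ let $\hat\sigma(x):=\sigma^{m}(x)$ with $m\ge 1$ least so that $\sigma^{m}(x)\in B_1$; then $\hat\sigma\in\symmetric_{B_1}$. Each $x$ with $\sigma(x)\in B_2$ opens a maximal excursion $x\to\sigma(x)\to\cdots$ through $B_2$ before returning to $B_1$; there are exactly $j$ such excursions, they are nonempty ordered sequences on pairwise disjoint elements of $B_2$, and the block‑2 elements they omit lie on cycles contained in $B_2$. The decisive bookkeeping fact is the cycle identity $\#\sigma=\#\hat\sigma+\#\sigma'$, where $\sigma'$ is the restriction of $\sigma$ to the block‑2 elements on cycles contained in $B_2$: collapsing each excursion puts the $\sigma$-cycles meeting $B_1$ in cycle‑preserving bijection with those of $\hat\sigma$, while the cycles of $\sigma'$ carry over verbatim. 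Consequently $\sigma\mapsto(\hat\sigma,\,\{x:\sigma(x)\in B_2\},\,\text{the }j\text{ excursions},\,\sigma')$ is a weight‑multiplicative bijection onto the admissible data.

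Summing over the four ingredients then gives $W_j$ as a product of independent contributions. The induced permutation ranges over all of $\symmetric_{B_1}$, contributing $\sum_{\hat\sigma}\alpha^{\#\hat\sigma}=\alpha^{\uparrow n_1}$ by the Stirling generating function recalled above (the case $\per_{\alpha}J_{m}=\alpha^{\uparrow m}$). The $j$ excursion‑opening elements form any $j$-subset of $B_1$ (giving $\binom{n_1}{j}$), a chosen $v$-element subset of $B_2$ is distributed into the $j$ labeled nonempty ordered excursions in $v!\binom{v-1}{j-1}$ ways (a Lah count), and the remaining $n_2-v$ elements of $B_2$ form pure cycles of total weight $\alpha^{\uparrow(n_2-v)}$. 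Hence
\[W_j=\alpha^{\uparrow n_1}\binom{n_1}{j}\sum_{v=j}^{n_2}\binom{n_2}{v}\,v!\binom{v-1}{j-1}\,\alpha^{\uparrow(n_2-v)}.\]

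It remains to evaluate the inner sum, which I expect to be routine. Substituting $w=n_2-v$ and using $\alpha^{\uparrow w}/w!=\binom{\alpha+w-1}{w}=[t^{w}](1-t)^{-\alpha}$ together with $\binom{n_2-w-1}{j-1}=[t^{\,n_2-j-w}](1-t)^{-j}$, the sum becomes $n_2!$ times the coefficient of $t^{\,n_2-j}$ in $(1-t)^{-\alpha}(1-t)^{-j}=(1-t)^{-(\alpha+j)}$, namely $n_2!\binom{\alpha+n_2-1}{n_2-j}=n_2^{\downarrow j}\,\alpha^{\uparrow n_2}/\alpha^{\uparrow j}$. Combined with $\binom{n_1}{j}=n_1^{\downarrow j}/j!$ this gives exactly $W_j=\alpha^{\uparrow n_1}\alpha^{\uparrow n_2}n_1^{\downarrow j}n_2^{\downarrow j}/(j!\,\alpha^{\uparrow j})$, and the proposition follows. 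The genuine obstacle is the middle step: justifying the cycle factorization $\#\sigma=\#\hat\sigma+\#\sigma'$ and verifying that the first‑return construction is a bijection onto all admissible data; once these are secured, the Stirling, Lah, and Vandermonde pieces are standard.
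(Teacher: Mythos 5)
Your proof is correct in substance, and the first thing to note is that the paper itself offers \emph{no} proof of this proposition: it is quoted verbatim from \cite{RubakMollerMcC2010} (Proposition 1 in their Appendix A.2), so there is no internal argument to compare against. Your argument is a complete, self-contained combinatorial proof where the paper gives only a citation. The reduction to the weighted count $W_j$ is right; the first-return decomposition --- collapsing the $j$ excursions through $B_2$ to obtain $\hat\sigma\in\symmetric_{n_1}$, recording the excursion contents as $j$ labeled nonempty lists (the Lah count $v!\binom{v-1}{j-1}$), and retaining the $B_2$-internal cycles as $\sigma'$ --- is a genuine weight-multiplicative bijection; the cycle identity $\#\sigma=\#\hat\sigma+\#\sigma'$ holds for exactly the reason you give; and the evaluation $n_2!\,[t^{n_2-j}](1-t)^{-(\alpha+j)}=n_2^{\downarrow j}\,\alpha^{\uparrow n_2}/\alpha^{\uparrow j}$ is correct. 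Incidentally, your derivation produces $W_j=\alpha^{\uparrow n_1}\frac{n_1^{\downarrow j}}{j!}\,n_2^{\downarrow j}\,(\alpha+j)^{\uparrow(n_2-j)}$, a polynomial identity in $\alpha$, which also confirms that the proposition's ``defined to be $0$ when undefined'' convention is consistent: whenever $\alpha^{\uparrow j}=0$ the factor $\alpha^{\uparrow n_1}$ vanishes too.

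One discrepancy you glossed over, and should state explicitly: with the paper's definition $\rho=A_{11}A_{22}/(A_{12}A_{21})$, the displayed identity is \emph{false}, and your own reduction shows why. The $j$th diagonal product is $A_{11}^{n_1-j}A_{22}^{n_2-j}(A_{12}A_{21})^{j}$, so factoring out $A_{11}^{n_1}A_{22}^{n_2}$ leaves $\bigl(A_{12}A_{21}/(A_{11}A_{22})\bigr)^{j}=\rho^{-j}$, not $\rho^{j}$; your sentence asserting that the coefficient becomes ``a power of the ratio $\rho$'' silently inverts the ratio. The $2\times 2$ case makes the point: $\per_{\alpha}A[1,1]=\alpha^{2}A_{11}A_{22}+\alpha A_{12}A_{21}$, whereas the stated formula gives $\alpha^{2}A_{11}A_{22}+\alpha A_{11}A_{22}\rho$, and these agree only if $\rho=A_{12}A_{21}/(A_{11}A_{22})$. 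So the proposition as printed has $\rho$ upside down (a typo in the paper or in transcription from \cite{RubakMollerMcC2010}); what you actually proved is the corrected statement with $\rho:=A_{12}A_{21}/(A_{11}A_{22})$, which is the true one. Everything downstream of that sentence --- the bijection and the evaluation of $W_j$ --- is sound, but as written your reduction step contradicts the definition of $\rho$ you quote, so flag the correction rather than paper over it.
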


Another natural consideration are what we call {\em homogeneously symmetric matrices} $\mathbf{H}[a,b;n]$.  For $a,b\in\mathbb{C}$, we define $\mathbf{H}[a,b;n]$ to be the $n\times n$ matrix with diagonal entries $a$ and off-diagonal entries $b$.  Since $n$ is typically understood, we will usually write $\mathbf{H}[a,b]$.  For example,
\[\mathbf{H}[a,b;3] := \begin{pmatrix} a & b & b \\ b & a & b \\ b & b & a\end{pmatrix}.\]
\begin{prop}
Let $\mathbf{H}[a,b]$ be a homogeneously symmetric matrix and write $d:=b/a$.  Then, for any $\alpha\in\mathbb{C}$,
\begin{equation}\label{eq:per homo sym}
\per_{\alpha}\mathbf{H}[a,b]=a^n\sum_{k=1}^n\sum_{l=0}^n c(n,k,l)\alpha^k d^{n-l},\end{equation}
where $c(n,k,l)$ denotes the number of permutations of $n$ having exactly $k$ cycles and exactly $l$ fixed points.
\end{prop}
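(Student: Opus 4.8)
The plan is to evaluate the defining sum \eqref{eq:alpha-permanent} directly, exploiting the fact that for a homogeneously symmetric matrix the diagonal product $\prod_{j=1}^n M_{j,\sigma(j)}$ depends on $\sigma$ only through its number of fixed points. First I would observe that, for $M=\mathbf{H}[a,b]$, the entry $M_{j,\sigma(j)}$ equals $a$ precisely when $\sigma(j)=j$, i.e.\ when $j$ is a fixed point of $\sigma$, and equals $b$ otherwise. Consequently, if $\sigma$ has exactly $l$ fixed points, then the product contributes $l$ factors of $a$ and $n-l$ factors of $b$, so that
\[\prod_{j=1}^n M_{j,\sigma(j)}=a^l b^{n-l}.\]

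Next I would rewrite this using $d=b/a$: since $b=ad$, we have $a^l b^{n-l}=a^l(ad)^{n-l}=a^n d^{n-l}$, which shows the diagonal product depends on $\sigma$ only through its number of fixed points. Substituting into \eqref{eq:alpha-permanent} gives
\[\per_{\alpha}\mathbf{H}[a,b]=\sum_{\sigma\in\symmetricn}\alpha^{\#\sigma}\,a^n d^{\,n-\mathrm{fix}(\sigma)},\]
where $\mathrm{fix}(\sigma)$ denotes the number of fixed points of $\sigma$ and $\#\sigma$ its number of cycles.

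Finally, I would partition the sum over $\symmetricn$ according to the two statistics $k=\#\sigma$ and $l=\mathrm{fix}(\sigma)$. Since the summand depends only on these two quantities, collecting terms and recognizing that the number of $\sigma\in\symmetricn$ with exactly $k$ cycles and exactly $l$ fixed points is, by definition, $c(n,k,l)$, yields \eqref{eq:per homo sym}. The argument is entirely elementary; the only steps requiring attention are the bookkeeping in the factorization $a^l b^{n-l}=a^n d^{n-l}$ (which tacitly uses $a\neq0$, implicit in the definition $d:=b/a$) and the clean identification of the joint cycle/fixed-point statistic with the coefficient $c(n,k,l)$. I do not anticipate any genuine obstacle beyond organizing the sum correctly, since the key simplification—that the weight of $\sigma$ is governed solely by its fixed-point count—is immediate from the structure of $\mathbf{H}[a,b]$.
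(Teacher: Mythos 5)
Your proof is correct and follows essentially the same route as the paper's: factoring out $a$ (equivalently, reducing to the unit-diagonal case $\mathbf{H}[1,d]$ via homogeneity of $\per_{\alpha}$) and then evaluating the defining sum directly by grouping permutations according to the joint statistic $(\#\sigma,\mathrm{fix}(\sigma))$, whose count is $c(n,k,l)$. Your write-up simply makes explicit the bookkeeping the paper summarizes as ``follows directly from the definition of the $\alpha$-permanent and the coefficients $c(n,k,l)$.''
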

\begin{proof}
It is plain that $\mathbf{H}[a,b]=a\mathbf{H}[1,b/a]$ and $\per_{\alpha}\mathbf{H}[a,b]=a^n\per_{\alpha}\mathbf{H}[1,d]$, $d:=b/a$.  So it is enough to consider matrices with unit diagonal.  Identity \eqref{eq:per homo sym} follows directly from the definition of the $\alpha$-permanent and the coefficients $c(n,k,l)$.
\end{proof}
The numbers $c(n,k,l)$ are related to the {\em rencontres numbers} $f(n,l)$, where $f(n,l)$ is the number of permutations of $[n]$ with exactly $l$ fixed points.  The rencontres numbers correspond to the number of {\em partial derangements} of an $n$-set, with a {\em derangement} defined as a permutation with zero fixed points.   By this definition,
\[f(n,l)=\sum_{k}c(n,k,l),\]
prompting us to call $c(n,k,l)$ the $(n,k,l)$-{\em generalized rencontres number}.
The rencontres numbers satisfy $f(0,0)=1,\,f(1,0)=0$ and, for $n>1$,
\[f(n+1,0)=n(f(n,0)+f(n-1,0)).\]
For $0\leq l\leq n$, $f(n,l)$ satisfies
\[f(n,l)={{n}\choose{l}}f(n-l,0).\]
The $(n,k,l)$ Rencontres numbers satisfy 
\begin{equation}\label{eq:(n,k,l) Rencontres recursion}
c(n,k,l)=\left\{\begin{array}{cc}
c(n-1,k-1,l-1)+(n-l-1)c(n-1,k,l)+(l+1)c(n-1,k,l+1),& 0\leq l\leq k\leq n,\,k\leq n-l\\
0,&\mbox{otherwise.}\end{array}\right.\end{equation}
The recursion \eqref{eq:(n,k,l) Rencontres recursion} has a simple combinatorial interpretation.  We call $\sigma\in\symmetricn$ an $(n,k,l)$ permutation if it has exactly $k$ cycles and exactly $l$ fixed points.  An $(n,k,l)$ permutation can be obtained from $\sigma\in\mathcal{S}_{n-1}$ by inserting $n+1$ into one of the cycles of $\sigma$ only if
\begin{itemize}
	\item[(1)] $\sigma$ is an $(n-1,l-1,k-1)$ permutation: in this case, we obtain an $(n,k,l)$ permutation by appending $n+1$ to $\sigma$ as a fixed point;
	\item[(2)] $\sigma$ is an $(n-1,k,l)$ permutation: in this case, we insert $n+1$ into one of the $k-l$ cycles that are not fixed; there are $n-l-1$ possible points of insertion;
	\item[(3)] $\sigma$ is an $(n-1,k,l+1)$ permutation: in this case, we choose from one of the $l+1$ fixed points and insert $n+1$, bringing the number of fixed points in the new permutation to $l$ while keeping the total number of cycles fixed.
\end{itemize}
The recursion \eqref{eq:(n,k,l) Rencontres recursion} makes these numbers easy to compute.  The $(n,k,l)$-generalized rencontres numbers are also related to the number of derangements.  Let $g(n,k)$ denote the number of permutations of $[n]$ with $k$ cycles and zero fixed points.  Clearly, $g(n,k)=c(n,k,0)$ and $f(n,0)=\sum_{k}g(n,k)$.  We also have the expression
\begin{equation}\label{eq:no fixed identity}
c(n,k,l)=\left\{\begin{array}{cc}
{{n}\choose{l}}g(n-l,k-l),& l<n\\
\delta_{kl},& l=n,\end{array}\right.
\end{equation}
where $\delta_{kl}=1$ if $k=l$ and $0$ otherwise.  Appendix \ref{section:rencontres} contains tables of the generalized rencontres numbers for $n\leq10$.

\subsection{Approximating the $\alpha$-permanent}\label{section:numerical}
This and the following section concern the approximation and computational complexity of the $\alpha$-permanent. Among previous algorithms and asymptotic approximations of the permanent, the randomized method in \cite{KouMcCullagh2009} seems the most practical, but is not provably good.  Other methods, such as those in \cite{JerrumSinclairVigoda, LinialSW1998}, are either not practical (the method in \cite{JerrumSinclairVigoda} still requires $O(n^7(\log n)^4)$ operations) or not accurate (the approximation in \cite{LinialSW1998} is accurate up to a factor $e^n$).  All of the aforementioned approximation methods are valid for non-negative matrices and, in the case of \cite{KouMcCullagh2009}, $\alpha>0$.  The assumption that all entries of $M$ are non-negative is, in some sense, the permanental analog to positive semi-definiteness in the determinantal case.  For $\alpha>0$ and $M_{ij}\geq0$, each term in \eqref{eq:alpha-permanent} is non-negative.  Analogously, for $M$ positive semi-definite, $\det(M\cdot\pi)\geq0$ for all $\pi\in\partitionsn$ by Sylvester's criterion.  

Positivity of the terms of \eqref{eq:alpha-permanent} is crucial to the approximation in \cite{KouMcCullagh2009}, which uses the statistical method of importance sampling (IS) to approximate $\per_{\alpha}M$.  An introduction to IS is given in \cite{PressIS2007} pp.\ 411--412.  In IS, we estimate the sum
\[\per_{\alpha}M=\sum_{\sigma\in\symmetricn}\alpha^{\#\sigma}\prod_{j=1}^n M_{j,\sigma(j)}\]
by drawing random permutations $\sigma_1,\ldots,\sigma_N$, for $N$ as large as is computationally feasible, and, writing $f(\sigma):=\alpha^{\#\sigma}\prod_{j=1}^n M_{j,\sigma(j)}$, we approximate $\per_{\alpha}M$ by
\[\widehat{\per}_{\alpha}M:=\sum_{i=1}^N\frac{f(\sigma_i)}{P(\sigma_i)},\]
where $P(\sigma)$ is the probability of $\sigma$.  The choice of $\widehat{\per}_{\alpha}M$ is so that our estimate is {\em unbiased} and, by the law of large numbers, if $N$ is large enough, $\widehat{\per}_{\alpha}M$ should be close to the true value $\per_{\alpha}M$.  The efficiency of this method relies on choosing $P$ which is close to optimal.  It is known that $P(\sigma)\propto f(\sigma)$ is the optimal choice, which is rarely practical.  Kou and McCullagh use a variation of importance sampling, called sequential importance sampling, which allows them to draw from $P$ which is nearly optimal.

Because the determinant can be computed efficiently, \eqref{eq:per det beta} is a natural candidate for use in IS.  To use \eqref{eq:per det beta} in IS, we choose a sequence $\pi_1,\ldots,\pi_N$ of random partitions from $P^*$ and estimate $(-1)^n\per_{\alpha}M$ by
\[(-1)^n\widehat{\per}_{\alpha}M=\sum_{j=1}^N\frac{(-\alpha)^{\downarrow\#\pi}\det(M\cdot\pi)}{P^*(\pi)}.\]
 In view of \eqref{eq:per det beta},
the optimal importance sampling distribution is $P^*(\pi)\propto|(-\alpha)^{\downarrow\#\pi}\det(M\cdot\pi)|$.  One issue is that, even if we could draw from $P^*$ and we assume $M$ is positive definite, for any $\alpha\in\mathbb{R}$, there will be both positive and negative terms in \eqref{eq:per det beta}, and our estimate based on importance sampling could have extraordinarily high variance.  In fact, some preliminary numerical tests illustrate this issue, but suggest that when $\alpha$ is a negative integer, the problem is simplified and a reasonable approximation is possible.

\subsubsection{Some numerical illustrations}
This section is a prelude to section \ref{section:computational complexity}, where we draw on observations from numerical tests to analyze, intuitively, any further reaching implications for the complexity of the $\alpha$-permanent.
We show only a few numerical approximations in order to illustrate IS, as well as some of the caveats discussed above.  

The sampling distribution $P^*$ we used is the Pitman-Ewens$(a,\theta)$ distribution \cite{Pitman2005}, for various choices of $(a,\theta)$.  We would not expect that this distribution is near optimal; however, when $a=-k$ for $k\in\mathbb{N}$, we can choose $a<0$ and $\theta=-ka$, which restricts the Pitman-Ewens distribution to $\partitionsnk$.  On the other hand, when $a$ is not a negative integer, it seems there is no optimal choice of $(a,\theta)$; in this case, all estimates, based on different choices of $(a,\theta)$, returned values similar to those in Table \ref{table:1} for the $1$ and $-2.5$ case (the table reflects $a=0$ and $\theta=1$ in these cases, the {\em standard} Ewens sampling formula \cite{Ewens1972}).  The purpose of the demonstration is to show the striking difference when using \eqref{eq:per det beta} and a naive choice of $P^*$ to approximate the $\alpha$-permanent in the cases when $\alpha$ is a negative integer and all other cases.  The summaries are given in Table \ref{table:1}.
\begin{table}
\begin{tabular}{c|ccc}
& Actual & Estimate $\pm$ standard error & relative error\\\hline
$\per_{-2}X_1$ & 407.52 & 406.43 $\pm$ 10.32 & 2.53\%\\
$\per_{-3}X_1$ & 117488 & $117300\pm3957$ & 3.37\%\\
$\per_{-2.5}X_1$ & -44088 & $-427609\pm7.03\times10^6$ & 1643\%\\
$\per X_1$ & $1.6\times10^8$ & $3.1\times10^8\pm4.8\times10^9$ & 1548\%\\
\hline
$-2$-permanent & & $199.19\pm8.10$ & 4.01\%\\
$-3$-permanent & & $41692\pm1876$ & 4.45\%\\
$-2.5$-permanent & &$43338\pm995181$ & 2296\%\\
$1$-permanent & & $-1.1\times10^{7}\pm6.3\times10^8$ & 5727\%\\
\end{tabular}
\caption{Estimates of the $\alpha$-permanent for $\alpha=1,-2,-2.5,-3$ for randomly generated, symmetric positive definite matrices.  The first four rows are the $\alpha$-permanent for $X_1$, given in the appendix.  Each of the bottom four rows reflects 200 iterations of generating random, symmetric positive definite matrices ($8\times 8$) and comparing, for each iteration, the estimated quantity and the actual quantity.}
\label{table:1}
\end{table}

The standard errors for estimating the ordinary ($\alpha=1$) and $-2.5$-permanents exceed the value of the quantity being estimated.  In these cases, our approximation is unstable because it involves a sum of both large positive and large negative numbers, a consequence of the coefficient $(-\beta)^{\downarrow\#\pi}$ in \eqref{eq:per det beta}, which is alternating between positive and negative values for $\#\pi>\alpha$.  Stability is only achieved for negative integer values.

\subsection{Computational complexity of the $\alpha$-permanent}\label{section:computational complexity}
We conclude with some brief remarks about computational complexity of the $\alpha$-permanent, which, to our knowledge, has not been studied.  For any $\alpha\in\mathbb{R}$, we have
\[\per_{\alpha}M=(-1)^n\sum_{\pi\in\partitionsn}(-\alpha)^{\downarrow\#\pi}\det(M\cdot\pi).\]
It seems intuitive that $\per_{\alpha}M$ for $\alpha>0$ can be no easier to compute than $\per M$; however, when $k$ is a negative integer, we have cancellation of all terms indexed by $\pi$ with $\#\pi>-k$, yielding
\[\per_{-k}M=\sum_{\pi\in\partitionsnk}k^{\downarrow\#\pi}\det(M\cdot\pi).\]
When $M$ is positive semi-definite, all terms on the right-hand side above are non-negative and it seems reasonable that computation might be easier than the general $\alpha$ case.

Even if $M$ is not positive semi-definite, a reasonable approximation might be possible because the sum is over $\partitionsnk$, which is asymptotically of much smaller order than $\partitionsn$.  Writing $B(n,k):=\{\pi\in\partitionsn:\#\pi=k\}$, the $(n,k)$th Bell number, and $B(n):=\sum_k B(n,k)$, we see that, for fixed $k\in\mathbb{N}$,
\[B(n,\leq k):=\sum_{j=1}^kB(n,k)\sim k^n/k!\quad\mbox{as }n\rightarrow\infty\]
and $B(n)\sim n^{-1/2}[\lambda(n)]^{n+1/2}e^{\lambda(n)-n-1}$ as $n\rightarrow\infty$, where $\lambda(n)=e^{W(n)}$, for $W(n)$ the {\em Lambert W function}.  Identity \eqref{eq:per det beta} is also consistent with our knowledge that computation of the determinant is much easier than the permanent; the case $\beta=-1$ is the degenerate sum over one term in \eqref{eq:per det beta}.  Our investigation prompts the following conjecture.
\begin{conj}
Computation of the $\alpha$-permanent is $\#$P-complete, except for $\alpha\in\{-1,\ldots,-K\}$, for some $K\in\mathbb{N}\cup\{\infty\}$.  The complexity for $\alpha\in\{-2,-3,\ldots\}$ lies somewhere between P and $\#$P and is monotonically non-decreasing as $\alpha$ decreases.
\end{conj}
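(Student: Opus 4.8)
The plan is to treat the three regimes of $\alpha$ separately, using the determinantal expansion \eqref{eq:per det beta} as the organizing device, since it isolates exactly the feature that distinguishes negative integers from all other values: when $\alpha=-k$ the coefficient $(-\alpha)^{\downarrow\#\pi}=k^{\downarrow\#\pi}$ vanishes for $\#\pi>k$, so the sum collapses from all of $\partitionsn$ (super-exponentially many terms) onto $\partitionsnk$ (of size $\sim k^{n}/k!$), whereas for every other $\alpha$ the full partition lattice contributes. First I would turn this intuition into a genuine upper bound: for integer data, $\per_{\alpha}M$ is a signed sum of products of sub-determinants, each computable in polynomial time, so it lies in the class $\mathrm{GapP}$; for $\alpha=-k$ the number of summands is $O(k^{n})$, which I would use to place $\per_{-k}$ inside a bounded-block-count counting subclass whose nominal difficulty grows with $k$.

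For the $\#$P-completeness half of the conjecture ($\alpha>0$, $\alpha$ non-integer, and $\alpha$ a sufficiently negative integer if $K<\infty$), the plan is a Turing reduction from the $0/1$ permanent of Valiant. Reading \eqref{eq:alpha-permanent} on the adjacency matrix of a digraph $G$, the quantity $\per_{\alpha}M$ is the cycle-cover polynomial of $G$ evaluated at $\alpha$, with each cycle cover weighted by $\alpha^{(\text{number of cycles})}$. Since a single fixed-$\alpha$ oracle returns only one evaluation of the degree-$n$ polynomial $\alpha\mapsto\per_{\alpha}M$, the key step is to manufacture additional evaluation points by gadgets: I would attach to each vertex a local gadget that reweights the cycle through it, so that cycle covers of the enlarged digraph are in weight-preserving bijection with those of $G$ but carry an effective per-cycle factor different from $\alpha$. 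Running the fixed oracle on polynomially many such gadgeted digraphs yields enough evaluations of $\alpha\mapsto\per_{\alpha}M$ to interpolate and recover $\per M=\per_{1}M$. The non-integer case should follow the same template, the only subtlety being to keep the gadget weights and the recovered interpolation over a field where the cancellation of \eqref{eq:per det beta} does not intervene.

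For the negative-integer regime and the claimed monotonicity, the plan is to build reductions along the chain $\alpha=-2,-3,\dots$. To show the complexity is non-decreasing as $\alpha$ decreases, I would give a polynomial-time reduction $\per_{-k}\le_{T}\per_{-(k+1)}$: given $M\in\mathbb{C}^{n\times n}$, embed it as a block of a padded matrix $M'$ so that the single extra block permitted by passing from $\partitionsnk$ to $\mathcal{P}_{[n']:k+1}$ is forced onto the padded coordinates, making $\per_{-(k+1)}M'$ a simple multiple of $\per_{-k}M$. Together with the $\mathrm{GapP}$ upper bound and the degenerate base case $\per_{-1}M=\det M\in\mathrm{P}$ (the $\#\pi\le1$ truncation of \eqref{eq:per det beta}), this arranges the family $\{\per_{-k}\}_{k\ge1}$ into a chain of non-decreasing complexity inside the band between $\mathrm{P}$ and $\#$P; establishing that the successive inclusions are strict is itself part of the conjecture.

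The hard part, and the reason the statement is only a conjecture, is the existence and size of the threshold $K$. Everything above yields upper bounds and relative (monotone) comparisons, but none of it produces the $\#$P-hardness of $\per_{-k}$ for a specific finite $k\ge2$, nor rules it out. Deciding whether some finite negative integer already makes the truncated cover polynomial $\sum_{\pi\in\partitionsnk}k^{\downarrow\#\pi}\det(M\cdot\pi)$ $\#$P-hard (so $K<\infty$), or whether the truncation keeps it tractable-relative-to-$k$ for all $k$ (so $K=\infty$), is a dichotomy question for a restricted counting problem, and I expect it to demand genuinely new lower-bound machinery: the gadget interpolation of the second paragraph is precisely what the negative-integer cancellation defeats, so a fundamentally different hardness source would be required. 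This is where I expect the program to stall, and it is the crux separating the conjecture from a theorem.
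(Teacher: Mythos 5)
This statement is a conjecture: the paper contains no proof of it, only motivation, namely the identity \eqref{eq:per det beta}, the observation that at $\alpha=-k$ the sum collapses from $\partitionsn$ (of size $B(n)$) to $\partitionsnk$ (of size $\sim k^n/k!$), and the importance-sampling experiments of Table \ref{table:1}, which are stable only at negative integer $\alpha$. So there is no ``paper's own proof'' to match, and you are right to treat the task as laying out a research program rather than a proof. Your program is organized around exactly the same cancellation in \eqref{eq:per det beta} that the paper relies on, but everything you add on top of it --- a $\mathrm{GapP}$ upper bound, gadget-based interpolation for hardness at generic fixed $\alpha$, and a reduction chain for monotonicity --- goes beyond anything attempted in the paper, and your closing diagnosis (that the existence and size of the threshold $K$ is the part no known technique reaches) coincides with the paper's reason for leaving this as a conjecture.

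There is, however, a genuine gap in the one reduction you sketch concretely: the padding argument for $\per_{-k}\le_{T}\per_{-(k+1)}$ fails. Padding cannot change the per-cycle weight: if the padded coordinates are joined to $[n]$ by zero entries, then directly from \eqref{eq:alpha-permanent} the $\alpha$-permanent factorizes, $\per_{\alpha}M'=\per_{\alpha}M\cdot\per_{\alpha}C$, with the \emph{same} $\alpha$ on both sides. The partition expansion shows why the ``forced extra block'' idea cannot be rescued by weights: with a single padded index, a partition $\pi$ of $[n]$ with $m$ blocks reappears in the expansion of $\per_{-(k+1)}M'$ twice, once with the pad as a singleton block (coefficient $(k+1)^{\downarrow(m+1)}$) and once with the pad merged into one of its $m$ blocks (coefficient $m\,(k+1)^{\downarrow m}$), and these recombine as $(k+1)^{\downarrow(m+1)}+m\,(k+1)^{\downarrow m}=(k+1)\,(k+1)^{\downarrow m}$; every $\pi$ thus returns with its original $-(k+1)$-weight rescaled by a common constant, so the oracle answer is a multiple of $\per_{-(k+1)}M$ and $\per_{-k}M$ never appears. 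A gadget that genuinely shifted the effective weight from $-(k+1)$ to $-k$ is precisely the kind of evaluation-moving device whose existence or impossibility is the open crux, so monotonicity needs a fundamentally different mechanism (or may simply be false). Two smaller points: $\mathrm{GapP}$ membership presumes rational $\alpha$ and integer entries, and the interpolation gadgets for fixed positive or non-integer $\alpha$ are left unspecified, which is where all the difficulty of that half lives. Finally, you should be aware that $\per_{-k}$ is, up to sign, the \emph{fermionant} of the statistical-physics literature; Mertens and Moore (2011) showed it is $\#$P-hard for every constant $k\ge3$ and $\oplus$P-hard for $k=2$, which bears directly on --- and largely undercuts --- the intermediate-complexity and monotonicity clauses of the conjecture you were asked to address.
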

We leave open the possibility that $K=1$ above, in which case our conjecture is that the $\alpha$-permanent is $\#$P-complete except for $\alpha=-1$.  The crux of our conjecture, however, is that the $\alpha$-permanent must be (at least) $\#$P-complete unless $\alpha$ is a negative integer.

\section{Appendix}
\subsection{Matrices for numerical approximation}\label{section:matrices numerical}
Below is the matrix $X_1$ used in approximations in Table \ref{table:1}.  The matrix is symmetric about the diagonal and we include only the upper triangular part of $X_1$.
\[X_1=\begin{pmatrix}
4.42 & 3.13 & 3.14 & 3.45 & 4.01 & 3.85 & 3.39 & 2.70\\
 & 2.70 & 1.99 & 2.44 & 3.07 & 2.83 & 2.27 & 1.84\\
 & &3.52 & 2.26 & 2.73 & 2.43 & 2.85 & 2.36\\
& & & 3.57 & 3.01 & 3.17 & 2.93 & 1.90\\
& & & &4.12 & 3.69 & 3.01 & 2.03\\
& & & & & 3.91 & 3.03 & 2.08\\
& & & & & & 3.27 & 2.22\\
& & & & & & & 2.33
\end{pmatrix}
\]
\subsection{Generalized rencontres numbers}\label{section:rencontres}
Here, we list tables of the generalized rencontres numbers from section \ref{section:special matrices} for $n\leq 10$.  Blank entries correspond to 0.
\[\begin{tabular}{c|ccc}
$(2,k,l)$ & l=0 & 1 & 2\\\hline
k=1 & 1 & & \\
2 & 0 & 2& 1
\end{tabular}\quad\quad\begin{tabular}{c|cccc}
$(3,k,l)$ & 0 & 1 & 2 & 3\\\hline
1 & 2 & & &\\
2 & 0 & 3 & & \\
3 & 0 & 0 & 0 &1
\end{tabular}
\quad\quad
\begin{tabular}{ c|ccccc }
$(4,k,l)$ & 0& 1 & 2 & 3 & 4\\\hline
1 & 6 & & & & \\
2 & 3 & 8 & & & \\
3 & 0 & 0 & 6 & & \\
4 & 0 & 0 & 0 & 0 & 1
\end{tabular}\]
\[
\begin{tabular}{ c|cccccc }
$(5,k,l)$ & 0& 1 & 2 & 3 & 4 &5 \\\hline
1 & 24 & & & & &\\
2 & 20  & 30 & & & &\\
3 & 0 & 15 & 20 & & &\\
4 & 0 & 0 & 0 &10 & &\\
5 & 0 & 0 & 0 & 0 & 0 & 1
\end{tabular}\quad\quad
\begin{tabular}{ c|cccccccc }
$(6,k,l)$ & 0& 1 & 2 & 3 & 4 &5 &6\\\hline
1 & 120 & & & & &&\\
2 & 130  & 144 & & & &&\\
3 & 15 & 120 & 90 & & &&\\
4 & 0 & 0 & 45 &40 & &&\\
5 & 0 & 0 & 0 & 0 & 15 & &\\
6 & 0 & 0 & 0 & 0 & 0 & 0 & 1
\end{tabular}
\]
\[\begin{tabular}{ c|ccccccccc }
$(7,k,l)$ & 0& 1 & 2 & 3 & 4 &5 &6 &7\\\hline
1 & 720& & & & & &&\\
2 & 924  & 840& & & & &&\\
3 & 210 & 910 & 504 & & & &&\\
4 & 0 & 105 & 420 & 210 & & &&\\
5 & 0 & 0 & 0 & 105 & 70& & &\\
6 & 0 & 0 & 0 & 0 & 0 & 21 & &\\
7 & 0 & 0 & 0 & 0 & 0 & 0 & 0 & 1
\end{tabular}
\]
\[\begin{tabular}{ c|cccccccccc }
$(8,k,l)$ & 0& 1 & 2 & 3 & 4 &5 &6 &7&8\\\hline
1 & 5040&& & & & & &&\\
2 & 7308  & 5760&& & & & &&\\
3 & 2380 & 7392& 3360& & & & &&\\
4 & 105 & 1680 & 3640 & 1344 & & &&\\
5 & 0 & 0 & 420 & 1120 & 420& & &\\
6 & 0 & 0 & 0 & 0 & 210 & 112 & &\\
7 & 0 & 0 & 0 & 0 & 0 & 0 & 28 & &\\
8 & 0 & 0 & 0 & 0 & 0 & 0 & 0 &0 & 1
\end{tabular}
\]
\[\begin{tabular}{ c|ccccccccccc }
$(9,k,l)$ & 0& 1 & 2 & 3 & 4 &5 &6 &7&8&9\\\hline
1 & 40320&& & & & & &&\\
2 & 64224  & 45360&& & & & &&\\
3 & 26432 & 65772& 25920& & & & &&\\
4 & 2520 & 21420 & 33264 & 10080& & & &&\\
5 & 0 & 945 & 7560 & 10920 & 3024&& & &\\
6 & 0 & 0 & 0 & 1260 & 2520& 756&& & &\\
7 & 0 & 0 & 0 & 0 & 0 & 378 & 168 & &&\\
8 & 0 & 0 & 0 & 0 & 0 & 0 & 0 &36 & &\\
9 &  0 & 0 & 0 & 0 & 0 & 0 & 0 &0 & 0&1
\end{tabular}
\]
\[\begin{tabular}{ c|cccccccccccc }
$(10,k,l)$ & 0& 1 & 2 & 3 & 4 &5 &6 &7&8&9&10\\\hline
1 & 362880&  && & & & & &&\\
2 & 623376  & 403200&&& & & & &&\\
3 & 303660 & 642240& 226800& & & & &&\\
4 & 44100 & 264320 & 328860 & 86400&& & & &&\\
5 & 945 & 25200 & 107100 & 110880 & 25200&&& & &\\
6 & 0 & 0 &4725 & 25200 & 27300& 6048&&& & &\\
7 & 0 & 0 & 0 & 0 & 3150 & 5040 & 1260& & &&\\
8 & 0 & 0 & 0 & 0 & 0 & 0 & 630 & 240& & &\\
9 &  0 & 0 & 0 & 0 & 0 & 0 & 0 &0 & 45&&\\
10& 0 & 0 & 0 & 0 & 0 & 0 & 0 &0 & 0&0&1
\end{tabular}
\]

\begin{acknowledgment}
I especially thank Peter McCullagh for several insights while preparing this work. 
\end{acknowledgment}

\bibliography{crane-refs}
\bibliographystyle{abbrv}
\end{document}